\begin{document}
\newcommand{\per}{{\rm per}}
\newtheorem{teorema}{Theorem}
\newtheorem{lemma}{Lemma}
\newtheorem{utv}{Proposition}
\newtheorem{svoistvo}{Property}
\newtheorem{sled}{Corollary}
\newtheorem{con}{Conjecture}

\author{A. A. Taranenko}
\title{Upper bounds on the permanent of multidimensional (0,1)-matrices \thanks{The work is supported
by the Russian Science Foundation (grant 14--11--00555)}}
\date{}

\maketitle

\begin{abstract}
The permanent of a multidimensional matrix is the sum of products of entries over all diagonals.

By Minc's conjecture,  there exists a reachable upper bound on the permanent of 2-dimensional (0,1)-matrices. In this paper we obtain some generalizations of  Minc's conjecture to the multidimensional case. For this purpose we prove and compare several bounds on the permanent of multidimensional (0,1)-matrices. 

Most estimates can be used for matrices with nonnegative bounded entries.
\end{abstract}

\section{Definitions and upper bounds on the permanent of 2-dimensional matrices}

Let $n,d \in \mathbb N$, and let $I_n^d= \left\{ (\alpha_1, \ldots , \alpha_d):\alpha_i \in \left\{1,\ldots,n \right\}\right\}$.
A \textit{$d$-dimensional matrix $A$ of order $n$} is an array $(a_\alpha)_{\alpha \in I^d_n}$, $a_\alpha \in\mathbb R$. A matrix $A$ is called \textit{nonnegative} if $a_\alpha \geq 0$ for all $\alpha \in I^d_n$.

Let $k\in \left\{0,\ldots,d\right\}$. A \textit{$k$-dimensional plane} in $A$ is the set of entries obtained by fixing $d-k$ indices and letting the other $k$ indices vary from 1 to $n$.  A $(d-1)$-dimensional plane is said to be a \textit{hyperplane}.  The \textit{direction} of a plane is the vector describing which indices are fixed in the plane.

Let $\alpha$ belong to $I$, and let $(A|\alpha)$ denote the $d$-dimensional matrix of order $n-1$ obtained from the matrix $A$ by deleting the entries $a_\beta$ such that $\alpha_i=\beta_i$ for some $i \in \left\{1,\ldots,d \right\}$.

For a $d$-dimensional matrix $A$ of order $n$, denote by $D(A)$ the set of its diagonals
$$D(A)=\left\{ (\alpha^1,\ldots,\alpha^n) | \alpha^i \in I_n^d, \forall i ~\forall j \neq i ~ \rho (\alpha^i,\alpha^j)=d\right\},$$
where $\rho$ is the Hamming distance (the number of positions at which the corresponding entries are different).
Then the \textit{permanent} of a matrix $A$ is 
$$\per A = \sum\limits_{p\in D} \prod\limits_{\alpha \in p} a_\alpha.$$

In this paper we mostly consider (0,1)-matrices, that is, matrices  all of whose entries are equal to 0 or 1. But sometimes we concern matrices with nonnegative entries, which are not greater than 1.

First we show the trivial upper bound on the permanent of nonnegative 2-dimensional matrices.

\begin{utv} \label{trivoc}
Let $A$ be a nonnegative 2-dimensional matrix of order $n$. Suppose that the sum of entries in the $i$th row of the matrix $A$ is not grater than $r_i$. Then
$$\per A \leq \prod \limits_{i=1}^n r_i.$$
\end{utv}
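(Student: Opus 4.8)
The plan is to reduce to the classical formula for the permanent of a $2$-dimensional matrix and then apply a one-line monotonicity argument. First I would unwind the definition of $D(A)$ in the case $d=2$: a diagonal $(\alpha^1,\dots,\alpha^n)$ consists of $n$ points whose first coordinates are pairwise distinct and whose second coordinates are pairwise distinct, hence both coordinate families run over all of $\{1,\dots,n\}$. Reordering the points of the diagonal (which does not change the product $\prod_{\alpha\in p}a_\alpha$) so that $\alpha^i_1=i$, we may write $\alpha^i_2=\sigma(i)$ for a unique permutation $\sigma$ of $\{1,\dots,n\}$; this gives the bijection between $D(A)$ and the set of permutations, and the usual expression $\per A=\sum_{\sigma}\prod_{i=1}^n a_{i\sigma(i)}$.

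Next I would compare this sum with the expanded product of the row sums. Expanding $\prod_{i=1}^n\bigl(\sum_{j=1}^n a_{ij}\bigr)$ term by term yields $\sum_{f}\prod_{i=1}^n a_{if(i)}$, where $f$ ranges over \emph{all} maps $\{1,\dots,n\}\to\{1,\dots,n\}$, not only the bijections. Every permutation $\sigma$ is such a map, and because $A$ is nonnegative each term $\prod_i a_{if(i)}$ is $\geq 0$; therefore discarding the non-injective $f$ only decreases the total. Hence $\per A\leq\prod_{i=1}^n\bigl(\sum_{j=1}^n a_{ij}\bigr)$, and by hypothesis $\sum_{j=1}^n a_{ij}\leq r_i$ for each $i$, so $\per A\leq\prod_{i=1}^n r_i$, as claimed.

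An alternative route, which also serves as a sanity check and generalizes more directly, is induction on $n$ via a Laplace-type expansion along the first hyperplane: $\per A=\sum_{j=1}^n a_{1j}\per(A|(1,j))$, where in two dimensions the submatrix $(A|(1,j))$ is exactly the minor obtained by deleting the first row and the $j$th column. Each row sum of this minor is bounded by the corresponding $r_i$ with $i\geq 2$, so the inductive hypothesis gives $\per(A|(1,j))\leq\prod_{i=2}^n r_i$, and then $\per A\leq\bigl(\sum_{j}a_{1j}\bigr)\prod_{i=2}^n r_i\leq\prod_{i=1}^n r_i$; the base case $n=1$ is $\per A=a_{11}\leq r_1$.

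There is no genuine obstacle here — the statement is labelled trivial for good reason. The only point requiring a little care is the translation step: correctly reading off from the Hamming-distance definition of $D(A)$ that a $2$-dimensional diagonal is the same thing as the support of a permutation matrix, after which the bound is immediate from the nonnegativity of the entries.
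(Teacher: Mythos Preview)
Your proof is correct. Your alternative inductive route is essentially the paper's own argument: the paper also expands the permanent along a single row (the $n$th rather than the first), bounds $\sum_j a_{n,j}\per(A|(n,j))$ by $r_n\max_j\per(A|(n,j))$, and applies the inductive hypothesis to the minors. Your primary argument --- expanding $\prod_i\sum_j a_{ij}$ as a sum over all maps $f$ and observing that the permanent is the sub-sum over bijections --- is a genuinely different and more direct route: it avoids induction entirely and makes the inequality a one-line consequence of nonnegativity. The inductive approach, on the other hand, is the one that generalizes verbatim to the $d$-dimensional setting (Proposition~\ref{mtriv}), which is presumably why the paper presents it that way.
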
 

\begin{proof}
The proof is by induction on the order of matrices. Using the definition of the permanent, we have
$$ \per A = \sum \limits_{j=1}^n a_{n,j} \per (A|(n,j)) \leq r_n \max_{j= 1, \ldots, n} \per (A|(n,j)).$$

Note that $(A|(n,j))$ are the matrices of order $n-1$  such that  the sum of entries in their $i$th row is not greater than $r_i$. By the inductive assumption, $\per (A|(n,j)) \leq \prod \limits_{i=1}^{n-1} r_i$ for all $j=1 \ldots n$. Therefore,
$$\per A \leq \prod \limits_{i=1}^n r_i.$$
\end{proof}
 
The following inequality, proved by Bregman~\cite{bregman}, Schrijver~\cite{shriver}, and
Radhakrishnan~\cite{krishna}, is known as Minc's conjecture~\cite{gypminc}.

\begin{teorema}[\cite{bregman,krishna,shriver}] \label{gminc}
Let $A$ be a 2-dimensional matrix of order $n$, and let $r_i$ be the number of 1's in the $i$th row of the matrix $A$. Then
$$\per A \leq \prod \limits_{i=1}^n r_i!^{1/r_i}.$$ 
\end{teorema}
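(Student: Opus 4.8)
The plan is to prove this via the entropy method of Radhakrishnan; alternatives would be Schrijver's inductive convexity argument and Bregman's original averaging proof. Because $A$ is a $(0,1)$-matrix, $\per A$ is exactly the number of diagonals of $A$, i.e.\ the number of permutations $\sigma$ of $\{1,\ldots,n\}$ with $a_{i,\sigma(i)}=1$ for all $i$; if there are none the inequality is trivial, so assume at least one exists. Let $\sigma$ be uniformly distributed over this set, so that its base-$2$ entropy is $H(\sigma)=\log_2 \per A$. It then suffices to establish the information-theoretic estimate $H(\sigma)\le \sum_{i=1}^n \frac1{r_i}\log_2(r_i!)$, since exponentiating it gives the claim.

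To bound $H(\sigma)$ I would reveal the coordinates $\sigma(1),\ldots,\sigma(n)$ in a random order. For a fixed ordering $\tau=(\tau_1,\ldots,\tau_n)$ of the rows, the chain rule gives $H(\sigma)=\sum_{t=1}^n H\big(\sigma(\tau_t)\mid \sigma(\tau_1),\ldots,\sigma(\tau_{t-1})\big)$. Averaging this identity over a uniformly random $\tau$ and regrouping the summands according to the row $k=\tau_t$ being revealed, one gets
\[
H(\sigma)=\sum_{k=1}^n \mathbb{E}_{\tau}\Big[\,H\big(\sigma(k)\mid \sigma(j):\ j\text{ occurs before }k\text{ in }\tau\big)\,\Big].
\]
Fix $k$ and condition on the set $S$ of rows revealed before $k$ together with their $\sigma$-values. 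Then $\sigma(k)$ must be a column $j$ with $a_{k,j}=1$ not lying in $\{\sigma(j'):j'\in S\}$, so its conditional support has cardinality at most $N_k:=\#\{\,j:\ a_{k,j}=1,\ j\notin\{\sigma(j'):j'\in S\}\,\}$. Since entropy is bounded by the logarithm of the support size, the $k$-th term above is at most $\mathbb{E}_{\tau,\sigma}[\log_2 N_k]$, the expectation being over the random order and the random admissible permutation. Note $N_k\ge 1$ always, because $\sigma(k)$ itself is counted.

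The heart of the matter is that $\mathbb{E}_{\tau,\sigma}[\log_2 N_k]=\frac1{r_k}\log_2(r_k!)$ for each $k$. Fix any admissible $\sigma$: the $r_k$ columns in the support of row $k$ are the $\sigma$-images of exactly $r_k$ distinct rows, one of which is $k$ itself (since $a_{k,\sigma(k)}=1$). A support column $j$ contributes to $N_k$ iff the unique row $\sigma^{-1}(j)$ mapping to it has not yet been processed, so $N_k=r_k-\big(\text{number of these }r_k\text{ rows occurring before }k\text{ in }\tau\big)$. Under a uniformly random order the rank of row $k$ among these $r_k$ rows is uniform on $\{1,\ldots,r_k\}$, hence $N_k$ is uniform on $\{1,\ldots,r_k\}$ regardless of $\sigma$, and $\mathbb{E}_{\tau,\sigma}[\log_2 N_k]=\frac1{r_k}\sum_{m=1}^{r_k}\log_2 m=\frac1{r_k}\log_2(r_k!)$. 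Summing over $k$ yields $H(\sigma)\le\sum_{i=1}^n\frac1{r_i}\log_2(r_i!)$, which is the desired bound. The main obstacle is keeping the entropy bookkeeping honest — in particular justifying the regrouping of chain-rule terms over random orders and checking that the $r_k$ rows above really are distinct and include $k$, so that $N_k$ is \emph{exactly} uniform; everything else reduces to the elementary fact that a discrete random variable has entropy at most the logarithm of its number of possible values. The definition of $N_k$ is forced by the equality case, where $A$ is block-diagonal with all-ones diagonal blocks.
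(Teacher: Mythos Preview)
Your argument is correct; it is exactly Radhakrishnan's entropy proof, and the bookkeeping you flag (regrouping the chain-rule terms over random orderings, and the uniformity of $N_k$ on $\{1,\ldots,r_k\}$ for each fixed admissible $\sigma$) is handled properly.

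Note, however, that the paper does not give its own proof of this theorem: it is stated with references to Bregman, Schrijver, and Radhakrishnan and then used as a black box. So there is no ``paper's proof'' to compare against; your write-up simply reproduces one of the cited proofs (Radhakrishnan's), and the other two references give the alternative approaches you mention in your first sentence.
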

Using the theorem, we can extend the inequality on the permanent of nonnegative matrices with bounded entries.

\begin{sled} \label{mincsled}
Let $A$ be a nonnegative 2-dimensional matrix of order $n$ whose entries are not greater than 1. Suppose that the sum of entries in the $i$th row of $A$ is not greater than $r_i$. Then
$$\per A \leq \prod \limits_{i=1}^n \left\lceil r_i\right\rceil!^{\frac{1}{\left\lceil r_i\right\rceil}}.$$
\end{sled}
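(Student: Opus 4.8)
The plan is to reduce to the $(0,1)$-case and then apply Theorem~\ref{gminc}. Let $\mathcal{P}$ denote the set of nonnegative matrices of order $n$ with all entries at most $1$ and with $i$th row sum at most $r_i$ for each $i$; this is a compact polytope containing $A$, so $\per$ attains a maximum on it. Since $\per$ is a linear function of any single row when the remaining rows are fixed, one may pass, row by row, to a maximizer $A^{\ast}$ of $\per$ over $\mathcal{P}$ whose $i$th row is, for every $i$, a vertex of the polytope $P_i=\{x\in[0,1]^n:\sum_j x_j\le r_i\}$: starting from any maximizer, replace its first row by a vertex of $P_1$ that maximizes $\per$ with the other rows frozen (the maximal value does not change, $A^{\ast}$ being already globally optimal), then do the same for rows $2,\dots,n$ in turn.

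Next I would analyze the vertices of $P_i$ by an elementary perturbation argument. Moving a small amount $\pm\varepsilon$ between two coordinates shows that a vertex of $P_i$ has at most one coordinate in the open interval $(0,1)$; adding $\pm\varepsilon$ to a single such coordinate shows that, if one is present, the coordinate sum must equal $r_i$, so the vertex has $\lfloor r_i\rfloor$ ones and one coordinate equal to $r_i-\lfloor r_i\rfloor$, i.e.\ $\lceil r_i\rceil$ nonzero coordinates; otherwise the vertex is a $(0,1)$-vector with at most $\lfloor r_i\rfloor\le\lceil r_i\rceil$ ones. Hence each row of $A^{\ast}$ has at most one entry outside $\{0,1\}$ and at most $\lceil r_i\rceil$ nonzero entries. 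To clear a fractional entry equal to $c\in(0,1)$ in row $i$, I would use that $\per$ is affine in a single entry: $\per A^{\ast}=(1-c)\,\per B_0+c\,\per B_1$, where $B_0,B_1$ are obtained from $A^{\ast}$ by replacing that entry with $0$ and $1$ respectively. Replacing $A^{\ast}$ by whichever of $B_0,B_1$ has the larger permanent does not decrease $\per$, removes the fractional entry, keeps all other rows unchanged, and leaves row $i$ with at most $\lceil r_i\rceil$ ones. Performing this for every row produces a $(0,1)$-matrix $B$ with $\per B\ge\per A^{\ast}\ge\per A$ whose $i$th row has $k_i\le\lceil r_i\rceil$ ones.

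Theorem~\ref{gminc} applied to $B$, together with the fact that $t\mapsto (t!)^{1/t}$ is nondecreasing on $\mathbb N$, then gives
$$\per A\ \le\ \per B\ \le\ \prod_{i=1}^{n}(k_i!)^{1/k_i}\ \le\ \prod_{i=1}^{n}\lceil r_i\rceil!^{1/\lceil r_i\rceil},$$
with the degenerate situations $r_i=0$, $k_i=0$ (both forcing the relevant permanent to vanish), and $\lceil r_i\rceil\ge n$ handled separately. The step I expect to be the main obstacle is the vertex description of $P_i$ — in particular, justifying that an optimal matrix can be taken with at most one non-$(0,1)$ entry per row and that rounding such an entry up creates exactly $\lceil r_i\rceil$ ones in that row; the remaining ingredients (linearity of $\per$ in a row, affineness in one entry, monotonicity of $(t!)^{1/t}$, and the degenerate cases) are routine.
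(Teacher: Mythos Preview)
Your argument is correct and follows the same overall strategy as the paper's: replace each row by a $(0,1)$-row with at most $\lceil r_i\rceil$ ones without decreasing the permanent, then invoke Theorem~\ref{gminc}. The difference is purely in how that replacement is justified. The paper proceeds constructively: for row $i+1$ it permutes the columns so that the cofactors $\per(A^i|(i+1,k))$ are nonincreasing in $k$, and then overwrites row $i+1$ with $1$'s in the first $\lceil r_{i+1}\rceil$ positions and $0$'s elsewhere; since the Laplace expansion along that row is $\sum_k a_{i+1,k}\,\per(A^i|(i+1,k))$ with $0\le a_{i+1,k}\le 1$ and $\sum_k a_{i+1,k}\le r_{i+1}$, this greedy reassignment can only increase the permanent. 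You instead appeal to compactness to locate a maximizer over $\mathcal P$, use linearity in each row to push every row to a vertex of $P_i$, describe those vertices, and round the (at most one) fractional entry per row. The paper's route is shorter and avoids the compactness/vertex analysis; yours is a bit more conceptual, explains \emph{why} at most one non-$\{0,1\}$ entry can survive in an optimal row, and requires the extra (easy) observation that $t\mapsto(t!)^{1/t}$ is nondecreasing, which the paper sidesteps because its construction produces exactly $\lceil r_i\rceil$ ones in each row.
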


\begin{proof}
Construct recursively nonnegative $2$-dimensional matrices  $A=A^0, A^1, \ldots, A^n$ such that their entries are not greater than 1 and $\per A^i \leq \per A^{i+1}$ for all $i \in \left\{0, \ldots, n-1\right\}$. 

Assume that the matrix $A^i$ is constructed. Let us construct $A^{i+1}.$ Rearrange the columns of the matrix $A^i$ so that $\per (A^i|(i+1,k)) \geq \per(A^i|(i+1,k+1))$ for all $k$. Call the resulting matrix $B^i$. Let $A^{i+1}= (a^{i+1}_{j,k})_{j,k=1}^n$ and $B^{i}= (b^{i}_{j,k})_{j,k=1}^n.$
Put 

$a^{i+1}_{j,k} = b^i_{j,k}$ for $j \neq i+1$,

$a^{i+1}_{i+1,k} = 1$ for $k \leq \left\lceil \gamma_{i+1} \right\rceil$,

and $a^{i+1}_{i+1,k} = 0$ for $k > \left\lceil \gamma_{i+1} \right\rceil$.

Then $A^n$ is a (0,1)-matrix with $\left\lceil r_i\right\rceil$ ones in the $i$th row.  By construction, $\per A^i = \per B^i \leq \per A^{i+1}.$  By Theorem~\ref{gminc}, we have 
$$\per A \leq \per A^n \leq \prod\limits_{i=1}^n \left\lceil r_i\right\rceil ! ^{\frac{1}{\left\lceil r_i\right\rceil}}.$$
\end{proof}

If we know only the sum of all entries of a 2-dimensional nonnegative matrix, then we can estimate its permanent by the following inequality.

\begin{sled}
Let $A$ be nonnegative 2-dimensional matrix of order $n$ whose entries are not greater than 1. Suppose that $\sum\limits_{i,j=1}^n a_{i,j}= \gamma n$. Then
$$\per A \leq (\gamma+1)^n e^{-n}  (e\sqrt{ \gamma+1})^{\frac{n}{ \gamma+1}}.$$
\end{sled}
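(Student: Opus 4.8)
The plan is to derive the bound from Corollary~\ref{mincsled} and then carry out a one-parameter optimization. First I would put $r_i=\sum_{j=1}^{n}a_{i,j}$, so that $\sum_{i=1}^{n}r_i=\gamma n$. If some $r_i=0$ the $i$th row of $A$ vanishes, hence $\per A=0$ and the inequality is trivial; so I may assume every $r_i>0$ and set $s_i=\lceil r_i\rceil\ge 1$, which gives $\sum_{i=1}^{n}s_i\le\sum_{i=1}^{n}(r_i+1)=(\gamma+1)n$. Corollary~\ref{mincsled} then yields $\per A\le\prod_{i=1}^{n}s_i!^{\,1/s_i}$, and the whole problem reduces to estimating this product knowing only that each $s_i\ge 1$ and $\sum_i s_i\le(\gamma+1)n$.

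Next I would replace the discrete quantity $s!^{1/s}$ by a smooth majorant tailored to the target. Using the standard Stirling-type bound $m!\le e\sqrt{m}\,(m/e)^{m}$ for integers $m\ge 1$, set $F(x)=\frac{x}{e}\,(e\sqrt{x})^{1/x}$ for $x>0$, so that $s_i!^{\,1/s_i}\le F(s_i)$ and therefore $\ln\per A\le\sum_{i=1}^{n}\varphi(s_i)$, where $\varphi(x)=\ln F(x)=\ln x-1+\frac{1}{x}+\frac{\ln x}{2x}$. The point of this choice of $F$ is that $F(\gamma+1)^{n}=(\gamma+1)^{n}e^{-n}(e\sqrt{\gamma+1})^{n/(\gamma+1)}$ is exactly the claimed right-hand side, so it suffices to prove $\sum_{i=1}^{n}\varphi(s_i)\le n\,\varphi(\gamma+1)$.

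To obtain this I would show that $\varphi$ is concave on $(0,\infty)$ and increasing on $[1,\infty)$. Then, since each $s_i\ge 1$, the average $\tfrac1n\sum_i s_i$ lies in $[1,\gamma+1]$, and Jensen's inequality gives $\sum_i\varphi(s_i)\le n\,\varphi\!\left(\tfrac1n\sum_i s_i\right)\le n\,\varphi(\gamma+1)$ by monotonicity; exponentiating finishes the proof. Differentiating twice one finds $\varphi'(x)=\frac{2x-1-\ln x}{2x^{2}}$ and $\varphi''(x)=\frac{1+2\ln x-2x}{2x^{3}}$, so concavity amounts to $2\ln x\le 2x-1$ (immediate from $\ln x\le x-1$), while monotonicity on $[1,\infty)$ amounts to $2x-1-\ln x>0$ there (it equals $1$ at $x=1$ and has derivative $2-1/x\ge 1$).

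I expect the only genuine content to be the concavity of $\varphi$: the Stirling bound and the monotonicity check are routine, but one must verify that the particular smooth majorant $F$ — chosen precisely so that $F(\gamma+1)^{n}$ reproduces the stated expression — is log-concave, so that Jensen is applicable. A secondary point to keep straight is the ceiling step, which is exactly what turns $\gamma$ into $\gamma+1$: one bounds $\sum_i\lceil r_i\rceil$ by $\gamma n+n$ rather than $\gamma n$, and must make sure Jensen is applied at a point of $[1,\gamma+1]$, where $\varphi$ is increasing.
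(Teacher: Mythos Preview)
Your argument is correct and follows the same route as the paper: apply Corollary~\ref{mincsled}, pass to $s_i=\lceil r_i\rceil$ with $\sum s_i\le(\gamma+1)n$, majorize $s!^{1/s}$ by the Stirling-type function $F(x)=e^{-1+1/x}x^{1+1/(2x)}$, and conclude by concavity plus Jensen. Your write-up is in fact more careful than the paper's---you work with $\log F$ rather than $F$, you verify the second derivative explicitly, and you separate out the monotonicity step needed to pass from $\varphi(\tfrac1n\sum s_i)$ to $\varphi(\gamma+1)$---but the strategy is identical.
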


\begin{proof}
Suppose that the sum of entries in the $i$th row of $A$ is equal to $r_i$. By Corollary~\ref{mincsled}, 
$$\per A \leq \prod \limits_{i=1}^n \left\lceil r_i\right\rceil!^{\frac{1}{\left\lceil r_i\right\rceil}}.$$

Note that $\sum\limits_{i=1}^n \left\lceil r_i\right\rceil \leq \sum\limits_{i=1}^n ( r_i +1 ) = ( \gamma +1) n.$ Using an approximation of the factorial
$$x! \leq e x^{x+1/2} e^{-x},$$
we obtain
$$\per A \leq \prod\limits_{i=1}^n e^{-1 + 1/\left\lceil r_i\right\rceil} \left\lceil r_i\right\rceil^{1 + \frac{1}{2\left\lceil r_i\right\rceil}}.$$

It can be proved that $e^{1/x} x^{1+1/2x}$ is a concave function for $x>1$. Therefore,
$$\per A \leq \prod\limits_{i=1}^n e^{-1 + \frac{1}{\gamma+1}} ( \gamma +1)^{1 + \frac{1}{2( \gamma+1)}} =
( \gamma+1)^n e^{-n}  (e\sqrt{ \gamma+1})^{\frac{n}{ \gamma+1}}. $$

\end{proof}

In the following section we prove upper bounds on the permanent of multidimensional (0,1)-matrices through the number of planes covering all ones of the matrix. Also, we propose an upper bound by means of sums of entries in hyperplanes and prove that it holds asymptotically.  In addition, we estimate the permanent of a 3-dimensional (0,1)-matrix through the permanent of some 2-dimensional matrix.

\section{Upper bounds on the permanent of multidimensional matrices}

There is a trivial upper bound on the permanent of nonnegative multidimensional matrices, which is similar to Proposition~\ref{trivoc}.

\begin{utv} \label{mtriv}
Let $A$ be a $d$-dimensional matrix of order $n$.  Suppose that the sum of entries in the $i$th hyperplane of the matrix $A$ is not grater than $r_i$. Then
$$\per A \leq \prod \limits_{i=1} ^n r_i.$$
\end{utv}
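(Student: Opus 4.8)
The plan is to mimic the inductive argument of Proposition~\ref{trivoc}, replacing rows by hyperplanes in one fixed direction. Fix, say, the first direction, and for $i \in \{1,\dots,n\}$ let the $i$th hyperplane be $\{\alpha \in I_n^d : \alpha_1 = i\}$; by hypothesis the sum of the entries of $A$ over this hyperplane is at most $r_i$. The induction is on the order $n$, the base case $n=1$ being immediate, since then $A$ consists of a single entry and $\per A = a_{(1,\dots,1)} \le r_1$.

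The first step is to record the Laplace-type expansion of the permanent along the hyperplane $\alpha_1 = n$. Every diagonal $p = (\alpha^1,\dots,\alpha^n) \in D(A)$ has its points pairwise at Hamming distance $d$, so in particular the first coordinates $\alpha^1_1,\dots,\alpha^n_1$ are pairwise distinct and hence form a permutation of $\{1,\dots,n\}$; thus exactly one point of $p$, say $\alpha$, satisfies $\alpha_1 = n$. Deleting $\alpha$ from $p$ leaves a diagonal of $(A|\alpha)$, and conversely every diagonal of $(A|\alpha)$ extends uniquely to a diagonal of $A$ through $\alpha$. Grouping the diagonals of $A$ by this distinguished point gives
$$\per A = \sum_{\alpha \in I_n^d,\ \alpha_1 = n} a_\alpha \,\per (A|\alpha).$$

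From here the argument is routine. Bounding each factor $\per(A|\alpha)$ by the maximum over the hyperplane, we get
$$\per A \le \Big(\sum_{\alpha:\,\alpha_1=n} a_\alpha\Big)\max_{\alpha:\,\alpha_1=n}\per(A|\alpha) \le r_n \max_{\alpha:\,\alpha_1=n}\per(A|\alpha).$$
Each $(A|\alpha)$ is a $d$-dimensional matrix of order $n-1$, and for $i < n$ its $i$th hyperplane in the first direction is (after relabeling) a subset of the $i$th hyperplane of $A$, so the sum of its entries is still at most $r_i$. By the inductive hypothesis $\per(A|\alpha) \le \prod_{i=1}^{n-1} r_i$, and therefore $\per A \le \prod_{i=1}^n r_i$.

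I do not anticipate a genuine obstacle: the only point needing care is the first step, namely the precise bijection between diagonals of $A$ through a fixed point of the last hyperplane and diagonals of the contracted matrix $(A|\alpha)$, which underlies the expansion formula; everything else is a verbatim translation of Proposition~\ref{trivoc}. One could also note that the direction was chosen arbitrarily, so the bound in fact holds with the $r_i$ measured along whichever of the $d$ directions yields the smallest product.
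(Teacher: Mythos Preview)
Your proof is correct and follows precisely the route the paper intends: the paper does not spell out a proof of Proposition~\ref{mtriv} but simply remarks that it is ``similar to Proposition~\ref{trivoc}'', and your argument is exactly that generalization---induction on $n$ via the Laplace expansion along the $n$th hyperplane. The only tacit assumption worth flagging is nonnegativity of $A$ (needed to pass from $\sum a_\alpha \per(A|\alpha)$ to $(\sum a_\alpha)\max \per(A|\alpha)$), which the statement omits but the surrounding text clearly intends.
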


Consider multidimensional (0,1)-matrices now. In~\cite{my}, the author proved an asymptotic upper bound on the permanent of matrices such that each  1-dimensional plane of the matrix contains exactly one 1.

\begin{teorema}
Let $d \geq 3$, and let $\Omega^d(n)$ be the set of $d$-dimensional (0,1)-matrices of order $n$ such that each  1-dimensional plane contains exactly one 1. Then
$$\max \limits_{A \in \Omega^d(n)} \per A \leq n!^{d-2} e^{-n + o(n)} \mbox{ as } n \rightarrow \infty.$$

\end{teorema}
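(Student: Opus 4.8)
First I would recast membership in $\Omega^d(n)$ in functional form. If $A\in\Omega^d(n)$, then the unique 1 on each line in direction $d$ defines a function $f:\{1,\dots,n\}^{d-1}\to\{1,\dots,n\}$ with $a_\alpha=1$ iff $\alpha_d=f(\alpha_1,\dots,\alpha_{d-1})$, and the requirement that every line in a direction $i<d$ also carry exactly one 1 is precisely the requirement that $f$ be bijective in its $i$th argument (a \emph{Latin hypercube} of dimension $d-1$). Writing each diagonal of $A$ so that the first coordinates of its entries run through $1,\dots,n$, one sees that $\per A$ equals the number of $(d-2)$-tuples $(\sigma_2,\dots,\sigma_{d-1})$ of permutations of $\{1,\dots,n\}$ for which $i\mapsto f(i,\sigma_2(i),\dots,\sigma_{d-1}(i))$ is again a permutation. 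Dropping that last requirement already yields the trivial bound $\per A\le (n!)^{d-2}$; the whole content of the theorem is that imposing ``the values $f(i,\sigma_2(i),\dots,\sigma_{d-1}(i))$ are pairwise distinct'' costs a factor $e^{-n+o(n)}$.

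To extract that factor I would use an entropy argument. Assuming $\per A\ge 1$, pick a uniformly random valid tuple $(\sigma_2,\dots,\sigma_{d-1})$, let $\pi$ be an independent uniformly random ordering of $\{1,\dots,n\}$, and put $V_i=f(i,\sigma_2(i),\dots,\sigma_{d-1}(i))$. Reveal the values $\sigma_2(i),\dots,\sigma_{d-1}(i)$ in the order $i=\pi(1),\pi(2),\dots,\pi(n)$. Then $\log\per A=H(\sigma_2,\dots,\sigma_{d-1})=H(\sigma_2,\dots,\sigma_{d-1}\mid\pi)$, and the chain rule together with Jensen's inequality give
$$\log\per A\ \le\ \sum_{k=1}^n\log\mathbb E[N_k],$$
where $N_k$ is the number of tuples $(\sigma_2(\pi(k)),\dots,\sigma_{d-1}(\pi(k)))$ compatible with what was revealed in the first $k-1$ steps. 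Compatibility forces each of the $d-2$ coordinates to avoid the $k-1$ values already used there (at most $(n-k+1)^{d-2}$ tuples) and forces the induced value $V_{\pi(k)}$ to avoid the $k-1$ values $V_{\pi(1)},\dots,V_{\pi(k-1)}$. Since $f(\pi(k),\cdot)$ is a Latin hypercube, a fully free position has image distributed uniformly on $\{1,\dots,n\}$, so one expects the second constraint to kill a proportion $\approx\frac{k-1}{n}$ of the compatible positions; granting the estimate $\mathbb E[N_k]\le \frac{(n-k+1)^{d-1}}{n}\,e^{o(1)}$ one obtains
$$\log\per A\ \le\ \sum_{k=1}^n\log\frac{(n-k+1)^{d-1}}{n}+o(n)\ =\ \log\frac{(n!)^{d-1}}{n^n}+o(n),$$
and since $n!/n^{n}=e^{-n+o(n)}$ by Stirling's formula this is exactly $\per A\le n!^{\,d-2}e^{-n+o(n)}$.

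The hard part is the estimate $\mathbb E[N_k]\le \frac{(n-k+1)^{d-1}}{n}e^{o(1)}$: a priori the set of compatible positions and the forbidden set of $V$-values could be so correlated that $V_{\pi(k)}$ is forbidden for \emph{every} compatible position, which would only give $\mathbb E[N_k]\le(n-k+1)^{d-2}$ and return the trivial bound. The essential case is $d=3$: there $d-2=1$, so there is no averaging over several free coordinates to spread $V_{\pi(k)}$ out, and the statement becomes the sharp bound $t(L)\le (n/e^{2})^{\,n+o(n)}$ on the number $t(L)$ of transversals of a Latin square $L$ of order $n$. This sharp decorrelation (which is what produces the $o(n)$ term) is carried out in~\cite{my} by a convexity argument over the random order $\pi$ in the spirit of Radhakrishnan's entropy proof of Theorem~\ref{gminc}; the cases $d\ge4$ then follow by the same scheme, the extra free coordinates only making the averaging easier. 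Accordingly I would simply invoke the bound of~\cite{my}; if a self-contained derivation were wanted here, the natural route is to feed the expansion above, via Corollary~\ref{mincsled}, into a suitably weighted $2$-dimensional matrix recording for each index and each target value the (weighted) number of compatible completions.
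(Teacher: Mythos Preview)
The paper does not prove this theorem at all: it is simply quoted from~\cite{my} (the author's earlier work on multidimensional permanents and transversals of Latin squares), and the present paper moves on immediately. So there is no in-paper argument to compare your proposal with; the ``paper's proof'' is a citation.

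Your proposal, in the end, does the same thing: after the heuristic entropy discussion you explicitly write ``Accordingly I would simply invoke the bound of~\cite{my}.'' In that sense your proposal and the paper agree. What you add beyond the paper is a Radhakrishnan-style sketch: the Latin-hypercube reformulation of $\Omega^d(n)$ is correct, the identification of $\per A$ with the number of $(d-2)$-tuples $(\sigma_2,\dots,\sigma_{d-1})$ whose induced map is a permutation is correct, and the trivial bound $(n!)^{d-2}$ and the Stirling calculation at the end are fine. However, the sketch is not a proof. The pivotal inequality you ``grant,'' $\mathbb E[N_k]\le \frac{(n-k+1)^{d-1}}{n}e^{o(1)}$, is not established and in fact cannot hold uniformly in $k$ as written (for $k=n$ one has $N_n=1$ while the right-hand side is $1/n$); what is actually needed is a bound on $\sum_k \mathbb E[\log N_k]$ after averaging over the random order $\pi$, and you correctly flag that this decorrelation step is the entire difficulty and is exactly what~\cite{my} supplies. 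Your closing remark about routing through Corollary~\ref{mincsled} and a weighted $2$-dimensional matrix is suggestive but not an argument.

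So: your reformulation and the outline of the entropy approach are sound and in the spirit of~\cite{my}, but as a stand-alone proof the proposal has a genuine gap at the key estimate, which you yourself acknowledge; since the paper itself offers no proof beyond citing~\cite{my}, your proposal is strictly more informative than the paper while still ultimately deferring to the same reference.
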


It would be great to generalize the bound from Theorem~\ref{gminc} to the multidimensional case and to get a bound on the permanent in terms of sums in hyperplanes. But at the moment we can estimate the permanent in terms of the number of planes covering all ones in a matrix. To make the further reasoning clearer, we prove a simple lemma.

\begin{lemma} \label{vsp}
Let $A$ be a $d$-dimensional matrix of order $n$. Let us fix some direction of $k$-dimensional planes, $1 \leq k \leq d-2,$ and enumerate them by $(d-k)$-dimensional indices. Put
$$T = \left\{ \tau: I^1_n \rightarrow I^{d-k - 1}_n | ((1, \tau(1)), \ldots, (n, \tau(n)))  \mbox{ is a diagonal in a } (d-k)\mbox{-dimensional matrix}\right\}.$$

 Denote by  $A_\tau$  the $(k+1)$-dimensional matrix of order $n$ such that the $i$th hyperplane of $A_\tau$ is the $(i,\tau(i))$-th $k$-dimensional plane of the matrix $A$. Then
$$\per A = \sum\limits_{\tau \in T} \per A_\tau.$$
\end{lemma}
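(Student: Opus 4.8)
The plan is to unpack both sides of the claimed identity in terms of the combinatorial definition of the permanent and exhibit a bijection between the diagonals of $A$ and the disjoint union, over $\tau \in T$, of the diagonals of $A_\tau$. The key observation is that a diagonal $p = (\alpha^1, \ldots, \alpha^n)$ of the $d$-dimensional matrix $A$ is a selection of $n$ entries that are pairwise at Hamming distance $d$; I would split each index $\alpha^j \in I_n^d$ into its first $d-k$ coordinates, call them $\beta^j \in I_n^{d-k}$ (the index of the $k$-dimensional plane through $\alpha^j$ in the fixed direction), and its last $k$ coordinates $\gamma^j \in I_n^k$ (the position of $\alpha^j$ inside that plane). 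The condition $\rho(\alpha^i,\alpha^j) = d$ for all $i \neq j$ forces, in particular, that the $\beta^j$ are pairwise at Hamming distance $d-k$, i.e. $(\beta^1,\ldots,\beta^n)$ is a diagonal of the $(d-k)$-dimensional "plane-index" matrix.

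First I would argue that since all $n$ values $\alpha^j$ have distinct first coordinate (as their pairwise Hamming distance is $d \ge 1$ in every coordinate — more precisely, distinctness in coordinate $1$ is forced because if two diagonal elements agreed in coordinate $1$ they would need to differ in all of the remaining $d-1$ coordinates, which is allowed, so I must be more careful here). The correct statement is: a diagonal of $A$ meets each hyperplane in direction $1$ exactly once, so after reindexing we may assume $\alpha^j$ has first coordinate $j$. Then $\beta^j = (j, \tau(j))$ for a function $\tau : I_n^1 \to I_n^{d-k-1}$, and the diagonal condition on the $\beta^j$ is exactly the statement that $((1,\tau(1)),\ldots,(n,\tau(n)))$ is a diagonal in the $(d-k)$-dimensional matrix, i.e. $\tau \in T$. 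Having fixed such a $\tau$, the remaining data is the tuple $(\gamma^1, \ldots, \gamma^n)$ of inner positions, and I would check that the pairwise-Hamming-distance-$d$ condition on the $\alpha^j$ is equivalent, given that the $\beta^j$ are already pairwise at full distance $d-k$, to the condition that $(\gamma^1,\ldots,\gamma^n)$ be a diagonal in the $(k+1)$-dimensional matrix whose $i$th hyperplane (in direction $1$) is the $(i,\tau(i))$-th $k$-dimensional plane of $A$ — that is, a diagonal of $A_\tau$. The weight $\prod_{j} a_{\alpha^j}$ of the diagonal in $A$ equals the weight $\prod_j (A_\tau)_{(j,\gamma^j)}$ of the corresponding diagonal in $A_\tau$ by the definition of $A_\tau$. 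Summing over all $\tau \in T$ and all diagonals of $A_\tau$ then gives $\per A = \sum_{\tau \in T} \per A_\tau$.

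The main obstacle is the bookkeeping in the equivalence "pairwise distance $d$ among the $\alpha^j$ $\iff$ ($\tau \in T$) and ($(\gamma^j)$ is a diagonal of $A_\tau$)". One direction is immediate (restricting coordinates cannot increase Hamming distance, so a diagonal of $A$ gives at distance-$d$ tuples that remain pairwise-distinct in each coordinate block), but the converse requires noticing that $\rho(\alpha^i,\alpha^j) = \rho(\beta^i,\beta^j) + \rho(\gamma^i,\gamma^j)$ and that $\rho(\beta^i,\beta^j) = d-k$ together with $\rho(\gamma^i,\gamma^j) = k$ (the latter being exactly the diagonal condition in the $(k+1)$-dimensional matrix $A_\tau$, once one accounts for the extra "direction-$1$" coordinate $j$ which is automatically distinct) sum to $d$. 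The hypothesis $k \le d-2$ guarantees $d-k \ge 2$, so the $(d-k)$-dimensional matrix indexing the planes genuinely has diagonals in the intended sense and the set $T$ is the right object; I would remark on this explicitly. The rest is routine verification that the map described is a bijection respecting weights, and then the identity follows by interchanging the order of summation.
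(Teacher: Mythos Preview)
Your proof is correct and follows essentially the same approach as the paper: partition the set $D$ of diagonals of $A$ according to the projection of each diagonal onto its first $d-k$ coordinates (which determines a $\tau \in T$), and recognize the sum over each part $D_\tau$ as $\per A_\tau$. The paper's version is terser---it simply writes $D = \bigcup_\tau D_\tau$ and observes $\sum_{p \in D_\tau}\prod_{\alpha \in p} a_\alpha = \per A_\tau$---while you spell out the Hamming-distance bookkeeping more explicitly, but the content is identical.
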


\begin{proof}

Without loss of generality we suppose that in the $k$-dimensional planes the first $d-k$ indices are fixed and the last $k$ indices vary. By the definition, 
$$\per A = \sum\limits_{p\in D} \prod\limits_{\alpha \in p} a_\alpha.$$

Divide the set $D$ of diagonals onto the parts
$$D_\tau = \left\{p \in D| p=((1, \tau(1), *, \ldots,*), \ldots, (n, \tau(n),*,\ldots,*))\right\},$$
where $*$ means an arbitrary symbol. Rearrange the summands in the definition of the permanent
$$\per A = \sum\limits_{\tau \in T}\sum\limits_{p\in D_\tau} \prod\limits_{\alpha \in p} a_\alpha.$$
 
Since $\sum\limits_{p\in D_\tau} \prod\limits_{\alpha \in p} a_\alpha$ is the permanent of the matrix $A_\tau$, the proof is over.

\end{proof}
Let $A$ be a $d$-dimensional matrix of order $n$.  Denote by $L ^ k_i (A)$ the set of $k$-dimensional planes in $A$ such that their last $k$ indices vary, the first $d-k$ indices are fixed, and the very first index equals $i$.
Let us prove an upper bound on the permanent of multidimensional matrices.

\begin{teorema} \label{bad}
Let $A$ be a $d$-dimensional (0,1)-matrix of order $n$. Suppose that all ones in the $i$th hyperplane $L ^ {d-1}_i (A)$  can be covered by  $s_{i,d-1}$  planes from  $L ^ {d-2}_i (A)$, $\ldots$, all ones in a plane from $L ^ {k}_i (A)$ can be covered by $s_{i,k}$  planes from  $L ^ {k-1}_i (A)$, $\ldots$, each 1-dimensional plane from  $L ^ 1_i (A)$ contains   $s_{i,1}$ ones at most. Then
$$\per A \leq \prod \limits_{k=1}^{d-1} \prod \limits_{i=1}^n  s_{i,k}!^{1/s_{i,k}}.$$
\end{teorema}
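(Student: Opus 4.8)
The plan is to induct on the dimension $d$, peeling off one layer of the plane-covering hierarchy at each step by combining Lemma~\ref{vsp} with Bregman's inequality (Theorem~\ref{gminc}). The base case $d=2$ is precisely Theorem~\ref{gminc}: here there is no covering to do, each $1$-dimensional plane (a row) contains $s_{i,1}$ ones, and $\per A \le \prod_i s_{i,1}!^{1/s_{i,1}}$.

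For the inductive step, suppose the bound holds for all matrices of dimension $d-1$. Apply Lemma~\ref{vsp} with $k=d-2$, so that $d-k=2$ and the index set $T$ consists of permutations $\tau$ of $\{1,\dots,n\}$ (diagonals of a $2$-dimensional matrix of order $n$); then $\per A = \sum_{\tau\in T}\per A_\tau$, where $A_\tau$ is the $(d-1)$-dimensional matrix whose $i$th hyperplane is the $(i,\tau(i))$th $(d-2)$-dimensional plane of $A$. First I would observe that each $A_\tau$ inherits the covering hypotheses for dimensions $1,\dots,d-2$: a $(d-2)$-plane of $A$ lying in $L^{d-2}_i(A)$ has all its ones covered by $s_{i,d-2}$ planes of $L^{d-3}_i$, and these restrict to the analogous covering inside $A_\tau$, and so on down. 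Hence by the inductive hypothesis
$$\per A_\tau \le \prod_{k=1}^{d-2}\prod_{i=1}^n s_{i,k}!^{1/s_{i,k}}$$
for every $\tau$ — a bound independent of $\tau$. So $\per A \le |T_0|\cdot\prod_{k=1}^{d-2}\prod_{i=1}^n s_{i,k}!^{1/s_{i,k}}$, where $T_0\subseteq T$ is the set of $\tau$ for which $A_\tau$ can possibly have a nonzero permanent. The remaining task is to bound $|T_0|$ by $\prod_{i=1}^n s_{i,d-1}!^{1/s_{i,d-1}}$.

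For that I would set up an auxiliary $2$-dimensional $(0,1)$-matrix $C$ of order $n$: let $C_{i,j}=1$ iff the $(i,j)$th $(d-2)$-dimensional plane of $A$ belongs to (is covered by) the chosen family of $s_{i,d-1}$ planes from $L^{d-2}_i(A)$ covering the ones of the $i$th hyperplane. If $\tau$ picks an index $(i,\tau(i))$ with $C_{i,\tau(i)}=0$, then that $(d-2)$-plane of $A$ contains only zeros among the ones relevant to diagonals of $A$ — more carefully, every diagonal of $A$ contained in $D_\tau$ passes through that all-zero region, so $\per A_\tau=0$. Thus $T_0$ is contained in the set of permutations $\tau$ with $C_{i,\tau(i)}=1$ for all $i$, and the number of such permutations is exactly $\per C$. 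The $i$th row of $C$ has at most $s_{i,d-1}$ ones, so by Theorem~\ref{gminc}, $|T_0|\le \per C \le \prod_{i=1}^n s_{i,d-1}!^{1/s_{i,d-1}}$. Multiplying the two bounds gives $\per A \le \prod_{k=1}^{d-1}\prod_{i=1}^n s_{i,k}!^{1/s_{i,k}}$, completing the induction.

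The main obstacle is the bookkeeping in the bridge step: making precise the claim that $A_\tau$ inherits all the lower-dimensional covering data uniformly (the covering families must be chosen once and for all in $A$, then restricted), and verifying that $C_{i,\tau(i)}=0$ for some $i$ really forces $\per A_\tau=0$ rather than merely shrinking it. One has to track carefully how "planes covering ones" in $A$ correspond to "planes covering ones" in the sliced matrix $A_\tau$, in particular that the first-index-equals-$i$ condition in the definition of $L^k_i$ is preserved under the slicing. Once that correspondence is pinned down, the estimate is just two applications of Bregman's theorem glued by Lemma~\ref{vsp}.
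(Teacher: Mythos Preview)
Your proof is correct and follows essentially the same strategy as the paper: induction on $d$, Lemma~\ref{vsp} to decompose $\per A$ as a sum of permanents indexed by permutations, Bregman's inequality applied to the auxiliary $2$-dimensional indicator matrix that counts the contributing permutations, and the inductive hypothesis applied to each slice $A_\tau$. The paper only spells out the step from $d=2$ to $d=3$, where your choice $k=d-2$ coincides with $k=1$, so your write-up is a faithful (and in fact more detailed) completion of the paper's sketch for general $d$.
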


\begin{proof}
The proof is by induction on the dimension of matrices. We consider only the first step of induction, that is the step from the 2-dimensional case to the 3-dimensional case.

Let $A$ be a 3-dimensional (0,1)-matrix of order $n$, and let $l_{i,j}$ be 1-dimensional planes in the $i$th hyperplane of the matrix $A$. Suppose that there are $m_i$ 1-dimensional planes $l_{i,j}$ containing ones. Also assume that each plane $l_{i,j}$  contains at most $s_i$ ones.

Put $S= \left\{ \sigma \in S_n |  \mbox{plane } l_{i,\sigma (i)} \mbox{ contains ones for all }  i\right\},$ where $S_n$ is the symmetric group on $\left\{1, \ldots, n \right\}$. 
By Lemma~\ref{vsp},
$$\per A = \sum \limits_{\sigma \in S} \per A_{\sigma},$$
where  $A_{\sigma}$  is the 2-dimensional (0,1)-matrix such that its $i$th row is the plane  $l_{i, \sigma (i)}.$

Note that for all $\sigma \in S$, there are  $s_i$  ones in the $i$th row of the matrix $A_\sigma$. By Theorem~\ref{gminc}, we have
$$\per A_\sigma \leq \prod \limits_{i=1}^n s_i!^{1/s_i}$$
for all $\sigma \in S.$ Consequently,
$$\per A \leq |S| \prod \limits_{i=1}^n s_i!^{1/s_i}.$$

Estimate the cardinality of the set $S$ now. For this purpose, consider the (0,1)-matrix $B$ such that  $b_{i,j} = 1$ if and only if the plane $ l_{i,j}$ contains ones. Notice that $|S| = \per B$  and that the $i$th row of $B$ contains $m_i$ ones. Using Theorem~\ref{gminc}, we obtain
$$\per B \leq \prod \limits_{i=1}^n m_i!^{1/m_i}.$$

Therefore,
$$\per A \leq \prod \limits_{i=1}^n m_i!^{1/m_i} s_i!^{1/s_i}.$$
\end{proof}

The equality holds, for example, if the matrix $A$ is a block diagonal matrix. The bound will be rough in many cases, because it depends on the arrangement of ones in the matrix. Unfortunately, we have no success in generalization of the known proofs of Theorem~\ref{gminc} to the multidimensional case, and we don't have a good estimate on the permanent of matrices through the sums in hyperplanes. But we propose the following conjecture, which was tested on a number of matrices of small order and dimension.

\begin{con}\label{voc}
Let $A$ be a $d$-dimensional (0,1)-matrix of order $n$. Suppose that there are $r_i$ ones in the $i$th hyperplane of $A$. Then
$$\per A \leq n!^{d-2} \prod \limits_{i=1}^n \left\lceil \frac{r_i}{n^{d-2}}\right\rceil ! ^{ \frac{1}{\left\lceil r_i /n^{d-2}\right\rceil}}.$$
\end{con}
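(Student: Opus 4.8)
\medskip
The plan is to induct on the dimension $d$, taking Theorem~\ref{gminc} as the base case $d=2$; the first genuine step, from $d=2$ to $d=3$, then uses only Theorem~\ref{gminc}. Throughout write $f(x)=x!^{1/x}$ for $x\in\mathbb N$ and $f(0)=0$. For the step from $d-1$ to $d$, I would fix a direction of $(d-2)$-dimensional planes — say, the direction in which the first two indices are fixed — and apply Lemma~\ref{vsp} with $k=d-2$. This yields
$$\per A=\sum_{\sigma\in S_n}\per A_\sigma,$$
where $A_\sigma$ is the $(d-1)$-dimensional $(0,1)$-matrix whose $i$th hyperplane is the $(i,\sigma(i))$th $(d-2)$-dimensional plane of $A$; writing $t_{i,j}$ for the number of $1$'s in that plane, we have $\sum_{j=1}^n t_{i,j}=r_i$ and $0\le t_{i,j}\le n^{d-2}$, and $A_\sigma$ has $t_{i,\sigma(i)}$ ones in its $i$th hyperplane. (A $(d-2)$-dimensional plane containing no $1$ makes $\per A_\sigma=0$, so extending the sum to all of $S_n$ is harmless.) One could instead take $k=1$ and reach a single $2$-dimensional permanent directly, but then the residual estimate below would be $(d-1)$-dimensional; keeping $k=d-2$ makes it $2$-dimensional at every step.

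Applying the inductive hypothesis to each $A_\sigma$ gives $\per A_\sigma\le n!^{\,d-3}\prod_{i=1}^n f(u_{i,\sigma(i)})$, where $u_{i,j}:=\lceil t_{i,j}/n^{d-3}\rceil\in\{0,\dots,n\}$. Summing over $\sigma$ and observing that $\sum_{\sigma\in S_n}\prod_i f(u_{i,\sigma(i)})=\per M$ for the $2$-dimensional nonnegative matrix $M=(f(u_{i,j}))_{i,j}$, the theorem would follow once one shows
$$\per M\le n!\prod_{i=1}^n f\!\left(\left\lceil\frac{r_i}{n^{d-2}}\right\rceil\right).$$
For $d=3$ there is no rounding — $n^{d-3}=1$, so $u_{i,j}=t_{i,j}$ and $\sum_j u_{i,j}=r_i$ — and this is exactly $\per M\le n!\prod_i f(\lceil r_i/n\rceil)$ for the matrix $M_{i,j}=f(s_{i,j})$ built from the line sums $s_{i,j}$ of the hyperplanes of $A$. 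This $2$-dimensional weighted inequality is the whole difficulty, and I expect it to be where the argument gets stuck.

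Equality in it should occur at the balanced profile $u_{i,j}\equiv r_i/n^{d-2}$, attained by block-diagonal matrices $A$; so the substance of the inequality is that equalizing the $u_{i,j}$ along each row cannot decrease $\per M$. This is not a formal monotonicity property: $\per$ is multilinear in the rows, and correlating the large entries across rows can outperform the balanced row, so any smoothing argument has to use the precise shape of $f$ together with the cap $u_{i,j}\le n$. The easy substitutes come out with the wrong multiplicative order: Proposition~\ref{mtriv} only gives $\per M\le n^n\prod_i f(\lceil r_i/n^{d-2}\rceil)$, too large by a factor $\asymp n^n/n!$; and writing $f(u)=\sum_{t\ge1}(f(t)-f(t-1))\,\mathbf 1[u\ge t]$, applying Theorem~\ref{gminc} to each $(0,1)$-layer, and multiplying out overshoots already for small $n$, because $f(2)-f(1)=\sqrt2-1$ exceeds $\lim_{t\to\infty}\bigl(f(t)-f(t-1)\bigr)=1/e$, so a row with one entry $2$, $n-2$ entries $1$, and one entry $0$ receives a bigger per-row factor than a row of all $1$'s with the same sum. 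What one really needs is a genuine rearrangement/smoothing lemma fitted to these weights, or an adaptation of the entropy proof of Theorem~\ref{gminc} (\cite{krishna}) to the multidimensional reveal process — which is hard precisely because, unlike in two dimensions, a single revealed cell of a diagonal can block many of the $r_k$ ones of a hyperplane simultaneously. A secondary technicality, arising only for $d\ge4$, is the gap between $\sum_j\lceil t_{i,j}/n^{d-3}\rceil$ and $\lceil\sum_j t_{i,j}/n^{d-3}\rceil$, which would have to be absorbed into a slightly sharper, profile-dependent version of the $2$-dimensional estimate; it vanishes for $d=3$, so that case is both the cleanest and the essential one.
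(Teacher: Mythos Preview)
The statement you are trying to prove is labelled \textbf{Conjecture}~\ref{voc} in the paper, and the paper does \emph{not} prove it. Immediately before stating it the author writes: ``Unfortunately, we have no success in generalization of the known proofs of Theorem~\ref{gminc} to the multidimensional case, and we don't have a good estimate on the permanent of matrices through the sums in hyperplanes. But we propose the following conjecture, which was tested on a number of matrices of small order and dimension.'' So there is no proof in the paper to compare your proposal against; what the paper does instead is (i) prove the cruder covering bound of Theorem~\ref{bad}, (ii) derive Proposition~\ref{groc} as an unconditional but weaker consequence of the same slicing idea, and (iii) establish only an \emph{asymptotic} form of the conjecture in Theorem~\ref{asym}, under the growth hypothesis $\min_i r_i(n)/n^{d-2}\to\infty$ and with an $e^{o(n)}$ slack.

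Your diagnosis of the obstruction is accurate and in fact matches the paper's. Your reduction via Lemma~\ref{vsp} with $k=d-2$ lands exactly on the two-dimensional weighted inequality $\per M\le n!\prod_i f(\lceil r_i/n^{d-2}\rceil)$ for $M_{i,j}=f(u_{i,j})$, and you correctly note that neither the trivial row bound (Proposition~\ref{mtriv}) nor a layered application of Theorem~\ref{gminc} closes it. This missing rearrangement/smoothing lemma is precisely the gap the author could not fill either; the asymptotic Theorem~\ref{asym} sidesteps it by replacing the exact concavity argument with Stirling-type approximations of $f$, which is why the $e^{o(n)}$ factor appears and why the hypothesis $r_i/n^{d-2}\to\infty$ is needed. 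In short: your approach is reasonable, the point at which it gets stuck is the genuine open part of the problem, and the paper offers no further leverage on it.
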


This equality holds if all 2-dimensional planes of some direction in the matrix $A$ are a 2-dimensional matrix such that the equality from Theorem~\ref{gminc} holds on it.

If Conjecture~\ref{voc} is true, we can estimate the permanent of matrices by means of the sums in planes of an arbitrary dimension. 

Actually, let $A$ be a $d$-dimensional (0,1)-matrix of order $n$, and let $l_\beta$ be $k$-dimensional planes of some direction in the matrix $A$. Suppose that the sum of entries in the plane $l_\beta$ is equal to $r_\beta$. By Lemma~\ref{vsp},
$$\per A = \sum \limits_{\tau \in T} \per A_\tau,$$
where 
$$T = \left\{ \tau: I^1_n \rightarrow I^{d-k - 1}_n | ((1, \tau(1)), \ldots, (n, \tau(n)))  \mbox{ is a diagonal in a } (d-k)\mbox{-dimensional matrix}\right\}.$$
Note that $A_\tau$ is the $(k+1)$-dimensional matrix such that $l_{i,\tau (i)}$ are its hyperplanes. The sum of entries in the hyperplanes of the matrix  $A_\tau$ is equal to $r_{i,\tau (i)}.$  If the conjecture is true, then $\per A_\tau \leq n!^{k-1} \prod \limits_{i=1}^n h_{i,\tau (i)}!^{1/h_{i,\tau (i)}},$ where $h_{i,\tau (i)} = \left\lceil \frac{r_{i,\tau (i)}}{n^{k-1}}\right\rceil.$ Therefore,
$$\per A \leq n!^{k-1} \sum \limits_{\tau \in T} \prod \limits_{i=1}^n h_{i,\tau (i)}!^{1/h_{i,\tau (i)}}.$$
Let $B$ be a $d$-dimensional matrix of order $n$ such that $b_\beta = h_{\beta}!^{1/h_{\beta}}$, $\beta \in I_n^{d-k}.$  Then
$$\per A \leq n!^{k-1} \per B.$$

If  $l_\beta$  are 1-dimensional planes, then we can  estimate the permanent of $A_\tau$ with the help of  Theorem~\ref{gminc}. Therefore the following proposition holds.
\begin{utv} \label{groc}
Let $A$ be a $d$-dimensional (0,1)-matrix of order $n$, and let $l_\beta$ be 1-dimensional planes of some direction in the matrix $A$. Suppose that the sum of entries in the plane $l_\beta$ is equal to $r_\beta$. Consider the $(d-1)$-dimensional matrix $B$ of order $n$ such that $b_\beta = r_{\beta}!^{1/r_{\beta}}.$ Then
$$\per A \leq  \per B.$$
\end{utv}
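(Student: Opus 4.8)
The plan is to specialize the general argument sketched just above the statement to the case $k=1$, so that the factor $n!^{k-1}$ becomes $n!^0 = 1$ and no appeal to Conjecture~\ref{voc} is needed; instead Theorem~\ref{gminc} supplies the required bound directly on each $2$-dimensional slice. First I would apply Lemma~\ref{vsp} with $k=1$: fixing the chosen direction of $1$-dimensional planes and enumerating them by their $(d-1)$-dimensional index, we write $\per A = \sum_{\tau \in T} \per A_\tau$, where $T$ is the set of maps $\tau\colon I^1_n \to I^{d-1}_n$ whose graph $((1,\tau(1)),\ldots,(n,\tau(n)))$ is a diagonal in a $(d-1)$-dimensional matrix, and $A_\tau$ is the $2$-dimensional matrix of order $n$ whose $i$th row is the plane $l_{i,\tau(i)}$.

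Next, since $A$ is a $(0,1)$-matrix, each $A_\tau$ is a $2$-dimensional $(0,1)$-matrix, and the number of ones in its $i$th row is exactly $r_{i,\tau(i)}$, the sum of entries in the plane $l_{i,\tau(i)}$. By Theorem~\ref{gminc},
$$\per A_\tau \leq \prod_{i=1}^n r_{i,\tau(i)}!^{1/r_{i,\tau(i)}}.$$
Summing over $\tau \in T$ gives
$$\per A \leq \sum_{\tau \in T} \prod_{i=1}^n r_{i,\tau(i)}!^{1/r_{i,\tau(i)}}.$$
Now I would observe that, by the definition of $B$, the right-hand side is precisely $\sum_{\tau\in T}\prod_{i=1}^n b_{(i,\tau(i))}$, and by Lemma~\ref{vsp} applied in reverse (this time with $k=0$, where $B$ itself plays the role of the $(k+1)$-dimensional matrix $A_\tau$ and the rows of $B$ are its hyperplanes — equivalently, just the definition of $\per B$ grouped by the first index) this sum equals $\per B$. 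Hence $\per A \leq \per B$, as claimed.

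The only point that needs a little care is the bookkeeping of indices: the direction of the $1$-dimensional planes $l_\beta$ is indexed by $\beta \in I_n^{d-1}$, and we must check that the reindexing used in Lemma~\ref{vsp} (singling out the "very first" varying coordinate) is consistent with the definition $b_\beta = r_\beta!^{1/r_\beta}$ on $B$, so that $\sum_{\tau\in T}\prod_i b_{(i,\tau(i))}$ really is $\per B$ and not the permanent of some coordinate-permuted copy of $B$ (which, of course, has the same permanent anyway). A minor caveat also concerns planes $l_\beta$ with $r_\beta = 0$: such an $A_\tau$ contains a zero row and contributes $0$ to $\per A$, consistent with $b_\beta = 0$, provided one reads $0!^{1/0}$ as $0$ in this degenerate case; I would mention this convention explicitly. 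There is no real obstacle here — the statement is essentially the $k=1$ corollary of the construction already displayed before the proposition, with Theorem~\ref{gminc} doing all the work.
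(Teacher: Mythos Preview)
Your proposal is correct and follows exactly the paper's own argument: the paper derives the general $k$-plane inequality using Lemma~\ref{vsp} and then notes that for $k=1$ one may replace the appeal to Conjecture~\ref{voc} by Theorem~\ref{gminc}, which is precisely what you do. The one small slip is that in Lemma~\ref{vsp} with $k=1$ the map is $\tau\colon I_n^1\to I_n^{d-2}$ (so that $(i,\tau(i))\in I_n^{d-1}$ indexes a line), not $\tau\colon I_n^1\to I_n^{d-1}$; you already flag this bookkeeping issue yourself, and it has no bearing on the argument.
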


The following example illustrates that this bound is weaker than Conjecture~\ref{voc}. Consider the  3-dimensional (0,1)-matrix of order 3:
$$
A = \left( \begin{array} {cccc}
1 & 1 & 1 & 1 \\ 1 & 1 & 0 & 0 \\ 1 & 0 & 0 & 0 \\ 1 & 0 & 0 & 0
\end{array}
\times
\begin{array} {cccc}
0 & 0 & 1 & 1 \\ 1 & 1 & 1 & 1 \\ 0 & 0 & 1 & 0 \\ 0 & 0 & 1 & 0
\end{array}
\times
\begin{array} {cccc}
0 & 1 & 0 & 0 \\ 0 & 1 & 0 & 0 \\ 1 & 1 & 1 & 1 \\ 1 & 1 & 0 & 0
\end{array}
\times
\begin{array} {cccc}
0 & 0 & 0 & 1 \\ 0 & 0 & 0 & 1 \\ 0 & 0 & 1 & 1 \\ 1 & 1 & 1 & 1
\end{array}
\right).
$$
Its permanent equals 74, the sum of entries in each hyperplane equals 8. Consider the matrix $B$ such that $b_{i,j}= s_{i,j}!^{1/s_{i,j}}$, where $s_{i,j}$ is the sum of entries from  $(i,j)$-th row of the matrix $A$. It can be checked that  $\per B \approx 104,23.$ But Conjecture~\ref{voc} claims that $\per A \leq 4! * 2^{4/2} = 96.$ On the other hand, Theorem~\ref{bad} gives that $\per A \leq 4!^2 = 576.$

However we can prove that Conjecture~\ref{voc} holds asymptotically for matrices such that the number of ones in its hyperplanes is sufficiently large.

\begin{teorema} \label{asym}
Assume that for some integer $d \geq 2$ and for all integer $n$ there are $n$ integer numbers  $r_1(n), \ldots ,r_n(n)$ such that $\min\limits_{i=1 \ldots n} r_i(n)/ n^{d-2} \rightarrow \infty$ as $n \rightarrow \infty.$ Denote by $\Lambda^d(n,r)$  the set of $d$-dimensional (0,1)-matrices of order $n$ such that the number of ones in their hyperplanes is not greater than $r_i(n).$ Put $F(x) = \left\lceil x\right\rceil!^{1/\left\lceil x\right\rceil}.$ Then
$$ \max\limits_{A \in \Lambda^d(n,r)} \per A \leq n!^{d-2} e^{o(n)} \prod \limits_{i=1}^n F\left(\frac{r_i(n)}{n^{d-2}}\right) \mbox{ as } n \rightarrow \infty.$$
\end{teorema}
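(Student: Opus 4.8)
The plan is to climb down one dimension at a time using Proposition~\ref{groc}, paying a factor $n!$ at each descent, while tracking how the hyperplane sums evolve, and to finish in dimension $2$ with Corollary~\ref{mincsled}. First I would record a version of Proposition~\ref{groc} for nonnegative matrices with entries at most $1$: if $C$ is such a $d'$-dimensional matrix of order $n$ and $r_\beta$ bounds the sum of entries in its $\beta$-th $1$-dimensional plane of a fixed direction not involving the first coordinate, then $\per C\le\per C'$, where $C'$ is the $(d'-1)$-dimensional matrix with $c'_\beta=\lceil r_\beta\rceil!^{1/\lceil r_\beta\rceil}=F(r_\beta)$. The proof is the one already given for Proposition~\ref{groc}: slice by Lemma~\ref{vsp} into $2$-dimensional matrices whose rows are the chosen $1$-dimensional planes, but invoke Corollary~\ref{mincsled} instead of Theorem~\ref{gminc} on each slice.

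Next I would collect elementary facts about $F$. From $t!\ge(t/e)^t$ and the inequality $x!\le e\,x^{x+1/2}e^{-x}$ used earlier one gets, for integers $t\ge1$,
$$\frac{t}{e}\le F(t)\le\frac{t}{e}+2\ln(t+3),\qquad\text{hence}\qquad\frac{n}{e}\le F(n)\le n,$$
and the same up to an additive constant for real arguments (via $\lceil\cdot\rceil$). Three consequences, all with error terms that vanish as the relevant argument tends to infinity, will be used repeatedly: (i) if $t_1,\dots,t_N\ge0$ satisfy $t_j\le n$ and $\sum_jt_j=T$, then by concavity of $\ln$, $\sum_jF(t_j)\le T/e+2N\ln(T/N+3)\le N\,F(T/N)\,(1+o(1))$; (ii) $n\,F(x)/F(n)\le x\,(1+o(1))$ for $1\le x\le n$; (iii) $F(\lambda x)=F(x)(1+o(1))$ as $\lambda\to1$, $x\to\infty$, since $F(x)=(x/e)(1+O(\ln x/x))$ by Stirling.

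Now fix $A\in\Lambda^d(n,r)$ and write $\bar s_i:=r_i(n)/n^{d-2}$, so $\min_i\bar s_i\to\infty$. Put $C_0:=A$; it is a $(0,1)$-matrix, so its entries are at most $1$, and the sum over its $i$-th hyperplane (first index $i$) is at most $r_i(n)$. For $k=1,\dots,d-2$ apply the extended Proposition~\ref{groc} to $C_{k-1}$ along some direction among the non-first coordinates, obtaining a $(d-k)$-dimensional matrix $B_k$ with entries $F(\cdot)\le F(n)$ and $\per C_{k-1}\le\per B_k$, and set $C_k:=B_k/F(n)$, so that $C_k$ has entries at most $1$ and $\per B_k=F(n)^n\per C_k=n!\,\per C_k$. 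Introduce $\theta^{(k)}_i$, an upper bound for the sum over the $i$-th hyperplane of $C_k$ divided by $n^{d-k-2}$, with $\theta^{(0)}_i:=\bar s_i$. Since the $i$-th hyperplane of $C_{k-1}$ is partitioned into $n^{d-k-1}$ parallel $1$-dimensional planes of average weight $\theta^{(k-1)}_i\le n$, estimate (i) bounds the $i$-th hyperplane sum of $B_k$ by $n^{d-k-1}F(\theta^{(k-1)}_i)(1+o(1))$; dividing by $F(n)$ and using (ii),
$$\theta^{(k)}_i\le\frac{n\,F(\theta^{(k-1)}_i)}{F(n)}\,(1+o(1))\le\theta^{(k-1)}_i\,(1+o(1)),$$
while $\sum F(\cdot)\ge\tfrac1e\sum(\cdot)$ together with $F(n)\le n$ gives the matching $\theta^{(k)}_i\ge\theta^{(k-1)}_i/e$. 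Iterating the bounded number $d-2$ of steps keeps every argument of $F$ above $\min_i\bar s_i/e^{d-2}\to\infty$, so the $o(1)$'s are legitimate and uniform over $\Lambda^d(n,r)$, and $\theta^{(k)}_i\le\bar s_i(1+o(1))$ throughout. The matrix $C_{d-2}$ is $2$-dimensional with entries at most $1$ and $i$-th row sum $\theta^{(d-2)}_i\le\bar s_i(1+o(1))$, so Corollary~\ref{mincsled} with (iii) gives $\per C_{d-2}\le\prod_iF(\bar s_i(1+o(1)))=e^{o(n)}\prod_iF(\bar s_i)$. Chaining,
$$\per A=\per C_0\le\per B_1=n!\,\per C_1\le n!\,\per B_2=\cdots\le(n!)^{d-2}\per C_{d-2}\le(n!)^{d-2}e^{o(n)}\prod_{i=1}^nF\!\left(\frac{r_i(n)}{n^{d-2}}\right),$$
since $F$ is nondecreasing; and for $d=2$ the claim is Corollary~\ref{mincsled} applied to $A$ itself.

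The main obstacle is the evolution of the hyperplane sums, namely showing that $\theta^{(k)}_i\le\bar s_i(1+o(1))$ survives $d-2$ nonlinear steps. This rests on using \emph{both} sides of $t/e\le F(t)\le t/e+O(\ln t)$: the upper side (with concavity of $\ln$) to bound a sum of $F$'s from above, and the lower side together with $F(n)\ge n/e$ to get $n\,F(x)/F(n)\le x(1+o(1))$ --- it is precisely the near-cancellation $\frac{F(\text{average weight})}{F(n)}\cdot n\approx\text{average weight}$ that prevents the $d-2$ factors $F(n)^n=n!$ from being wasted. The remaining work, routine once this is in place, is to keep the $o(1)$ terms uniform over $\Lambda^d(n,r)$; they depend only on $d$ and on the rate of $\min_ir_i(n)/n^{d-2}\to\infty$, because the arguments of $F$ never fall below $\min_ir_i(n)/(e^{d-2}n^{d-2})$.
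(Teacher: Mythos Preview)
Your proof is correct and follows essentially the same approach as the paper's: reduce the dimension one step at a time via Proposition~\ref{groc}, use Stirling-type bounds and concavity to control how the hyperplane sums evolve, divide out the factor $f(n)^n=n!$ at each step, and finish with Minc's inequality in dimension~$2$. The paper packages this as an induction on~$d$ and, at each step, rearranges (as in Corollary~\ref{mincsled}) to return to a genuine $(0,1)$-matrix before invoking the inductive hypothesis, whereas you unroll the induction and stay throughout with $[0,1]$-valued matrices via your extended Proposition~\ref{groc}; the core mechanism is the same.
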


\begin{proof}
The proof is by induction on the dimension of matrices. The basis of induction is the case $d=2$, that holds by Thereom~\ref{gminc}. 

Assume that for dimension $d-1$ the theorem holds, and prove the theorem for dimension $d$. Let $A$ be an arbitrary matrix from $\Lambda^d(n,r).$ Then the number of ones in its $i$th hyperplane is not greater than $r_i(n).$ 

Divide these hyperplanes into 1-dimensional planes $l_\beta$ of some direction.
Assume that there are  $s_\beta$ ones in the plane $l_\beta$. Consider the $(d-1)$-dimensional matrix $B$ such that $b_\beta= s_\beta!^{1/s_\beta}$.
By Proposition~\ref{groc}, $\per A \leq \per B.$ Put $f(x)= x!^{1/x}.$  Using an approximation of the factorial, we estimate $f(x):$ 
$$(2\pi x)^{1/2x} x e^{-1} \leq f(x) \leq e^{1/x} x^{1+1/2x} e^{-1}.$$
Denote by  $g(x)$  the right-hand side of the inequality. 
It can be checked that $g(x)$ is a concave function. Since  $\sum \limits_{\beta = (i,*,\ldots,*)} s_\beta = r_i(n)$, it follows that 
$$\sum \limits_{\beta = (i,*,\ldots,*)} g(s_\beta) \leq n^{d-2} g(r_i(n)/n^{d-2}),$$
where $*$ means an arbitrary symbol.
Estimate the sum of entries in the $i$th hyperplane of $B$:
$$\sum \limits_{\beta = (i,*,\ldots,*)} f(s_\beta) \leq n^{d-2}g(r_i(n)/n^{d-2}) = r_i(n) e^{-1+n^{d-2}/r_i(n)} \left(\frac{r_i(n)}{n^{d-2}}\right)^{n^{d-2}/2r_i(n)}.$$

Note that the entries of the matrix $B$ are not greater than $f(n).$ As in the proof of Corollary~\ref{mincsled}, we rearrange the sum of entries in the  hyperplanes of $B$ and obtain the $(d-1)$-dimensional matrix $C$ of order $n$ such that the entries of $C$ equal 0 or $f(n)$ and $\per B \leq \per C.$ Note that there are at most  $\frac{n^{d-2}g(r_i(n)/n^{d-2})}{f(n)}$ nonzero entries in the $i$th hyperplane of the matrix $C$.

Using the inequality for $f(n)$, we obtain an upper bound on the number of nonzero entries in the $i$th hyperplane of $C$
$$\frac{n^{d-2}g(r_i(n)/n^{d-2})}{f(n)} \leq \frac{e^{n^{d-2}/r_i(n)} }{(2\pi n)^{1/2n} } \left( \frac{r_i(n)}{n^{d-2}}\right)^{n^{d-2}/2r_i(n)} \frac{r_i(n)}{n}.$$
Denote by $s_i(n)$  the right-hand side of the inequality.

Let $\Lambda^{d-1}(n,s)$  be the set of $(d-1)$-dimensional (0,1)-matrices of order $n$ such that the number of ones in their $i$th hyperplane is not greater than $s_i(n).$ If we divide all entries of the matrix $C$ by $f(n)$, we obtain some matrix from $\Lambda^{d-1}(n,s)$.

Since $\per B \leq \per C$ and $\per A \leq \per B,$ it follows that $\per A \leq \per C.$   Because $A$ is an arbitrary matrix from $\Lambda^{d-1}(n,s)$, we have
$$\max\limits_{A \in \Lambda^d(n,r)} \per A \leq f(n)^n \max\limits_{C \in \Lambda^{d-1}(n,s)} \per C.$$

Under the hypothesis of the theorem, we get that $s_i(n) = (r_i(n)/n)^{1+o(1)}$ and  $\min\limits_{i=1 \ldots n} s_i(n)/ n^{d-3} \rightarrow \infty$ as $n\rightarrow\infty$. Therefore $\Lambda^{d-1}(n,s)$ satisfies the conditions of the theorem. By the inductive assumption, we finally obtain

$$\max\limits_{A \in \Lambda^d(n,r)} \per A  \leq  f(n)^n n!^{d-3} e^{o(n)}  \prod \limits_{i=1}^n F\left(\left(\frac{r_i(n)}{n^{d-2}}\right)^{1+ o(1)}\right) = n!^{d-2} e^{o(n)} \prod \limits_{i=1}^n F\left(\frac{r_i(n)}{n^{d-2}}\right) \mbox{ as } n \rightarrow \infty .$$

\end{proof}

By the same argument as in Corollary~\ref{mincsled}, we can obtain the similar results for nonnegative matrices with bounded entries.

\end{document}